\numberwithin{equation}{section}
\theoremstyle{plain}
\newtheorem{theorem}{Theorem}[section]
\newtheorem{corollary}[theorem]{Corollary}
\newtheorem{proposition}[theorem]{Proposition}
\theoremstyle{definition}
\newtheorem{definition}[theorem]{Definition}
\newtheorem{example}[theorem]{Example}
\theoremstyle{remark}
\newtheorem{remark}[theorem]{Remark}
\newtheorem{case[theorem]}{Case}
\title{A quantitative version of the Steinhaus theorem} 
\author{A. Iosevich\thanks{Supported in part by the National Science Foundation under grant NSF DMS - 2154232 } \ and J. Pakianathan}
\begin{document}
\maketitle

\begin{abstract} The classical Steinhaus theorem (\cite{Steinhaus1920}) says that if $A \subset {\Bbb R}^d$ has positive Lebesgue measure than $A-A=\{x-y: x,y \in A\}$ contains an open ball. We obtain some quantitative lower bounds on the size of this ball and in some cases, relate it to natural geometric properties of $\partial A$. We also study the process $K_n =\frac{1}{2}(K_{n-1} - K_{n-1})$ when $K_0$ is a compact subset of $\mathbb{R}^d$ and determine various aspects of its convergence to $Conv(K_1)$, the convex hull of $K_1$. We discuss some connections with convex geometry, Weyl tube formula and the Kakeya needle problem.

\noindent
		{\it Keywords: Measure theory, Steinhaus theorem, Convex geometry, Weyl tube formula.}
		
		\noindent
		2020 {\it Mathematics Subject Classification:} Primary: 28A75, 52A27. Secondary: 52A30, 53A07.

\end{abstract}                

\section{Introduction} 

\vskip.125in 

The classical Steinhaus theorem (\cite{Steinhaus1920}) says that if $A \subset {\Bbb R}^d$ has positive Lebesgue measure than $A-A=\{x-y: x,y \in A\}$ contains an open ball. The radius of this ball may be arbitrarily small with a fixed value of the Lebesgue measure of $A$, as can be seen by considering a tube of width $\epsilon^{\frac{1}{d-1}}$ and length $\epsilon^{-1}$ in $\mathbb{R}^d$. 

\begin{definition} Given $A \subset {\Bbb R}^d$, we define the Steinhaus radius of $A$, denoted by $r(A)$ via
$$ r(A)=\sup_{r \geq 0} \{r | A-A \text{ contains a closed ball of radius } r \text{ centered at the origin} \}, $$ where here, and throughout, 
$$ A-A=\{a-a': a,a' \in A\}.$$
 \end{definition} 

\vskip.125in 

The first result of this paper is obtain a simple quantitative bound for the Steinhaus radius of a set of positive Lebesgue measure in terms of the natural geometric properties of the underlying set: 

\vskip.125in 

\begin{theorem} \label{steinhausradiusthm} Let $U \subset {\Bbb R}^d$ be of finite positive Lebesgue measure. Let 
$$M_U(x)= |\{(U-x) \backslash U \cup U \backslash (U-x)\}|$$ and define
$$ M^{*}(U)=\sup_{|x|>0} \frac{M_U(x)}{|x|}.$$ 

Then 
\begin{equation} \label{iso} r(U) \ge \frac{|U|}{M^{*}(U)}. \end{equation} 

\end{theorem}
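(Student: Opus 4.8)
The plan is to reduce the membership assertion $x \in U-U$ to a positivity statement about the overlap $|(U-x)\cap U|$, and then to bound that overlap from below by means of $M_U(x)$. The elementary but decisive observation is that $x \in U-U$ holds as soon as $(U-x)\cap U$ is \emph{nonempty}: if $z \in (U-x)\cap U$, then $z \in U$ and $z+x \in U$, whence $x=(z+x)-z \in U-U$. Since a set of positive Lebesgue measure is in particular nonempty, it therefore suffices to prove $|(U-x)\cap U|>0$ for every $x$ in a small ball about the origin.

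Next I would estimate $|(U-x)\cap U|$ from below. Using the translation invariance of Lebesgue measure, $|U-x|=|U|$, so
$$ |(U-x)\cap U| = |U| - |U\setminus(U-x)| \ge |U| - M_U(x), $$
because $U\setminus (U-x)$ is one of the two pieces composing the symmetric difference whose total measure is $M_U(x)$. By the definition of $M^{*}(U)$ we have $M_U(x)\le M^{*}(U)\,|x|$ for every $x\neq 0$, and hence
$$ |(U-x)\cap U| \ge |U| - M^{*}(U)\,|x|. $$
If $M^{*}(U)=\infty$, the asserted inequality \eqref{iso} reads $r(U)\ge 0$ and is vacuous, so I may assume $M^{*}(U)<\infty$. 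Then for every $x$ with $|x| < |U|/M^{*}(U)$ the right-hand side is strictly positive, giving $|(U-x)\cap U|>0$ and therefore $x\in U-U$. Thus the entire open ball of radius $|U|/M^{*}(U)$ about the origin lies inside $U-U$; since every closed ball of radius $r<|U|/M^{*}(U)$ is contained in this open ball, the supremum defining $r(U)$ is at least $|U|/M^{*}(U)$, which is exactly \eqref{iso}.

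The argument is short, and the only delicate points are bookkeeping ones: one must verify that positive overlap measure produces an honest element of $U-U$ (so that the conclusion concerns the set $U-U$ itself and not merely a full-measure subset), keep careful track of open versus closed balls when passing to the supremum in the definition of $r(U)$, and isolate the degenerate case $M^{*}(U)=\infty$. I expect the main obstacle, such as it is, to be the first of these, and it is resolved precisely by the pointwise identity $x=(z+x)-z$, which converts nonemptiness into membership. Finally, I would remark that the constant is not optimal: since $U\setminus(U-x)$ and $(U-x)\setminus U$ have equal measure, both equal to $|U|-|(U-x)\cap U|$, one in fact has $|U\setminus(U-x)|=\tfrac12 M_U(x)$, which sharpens the estimate to $|(U-x)\cap U|\ge |U|-\tfrac12 M_U(x)$ and hence yields $r(U)\ge 2|U|/M^{*}(U)$; this improved constant is sharp on the extremal tube example of the introduction.
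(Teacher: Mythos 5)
Your proof is correct, and while it rests on the same underlying object as the paper's --- the overlap function $F(x)=\int \chi_U(x+y)\chi_U(y)\,dy=|(U-x)\cap U|$, whose positivity puts $x$ in $U-U$ --- your key estimate is genuinely different. The paper bounds $|F(x)-|U||$ by $\sqrt{|U|\,M_U(x)}$ via Cauchy--Schwarz applied to $\int(\chi_U(x+y)-\chi_U(y))\chi_U(y)\,dy$, obtaining positivity of $F(x)$ when $M_U(x)<|U|$; you reach the same threshold by pure set algebra, $|(U-x)\cap U|\ge |U|-M_U(x)$, with no $L^2$ machinery at all. Moreover, your closing remark is a genuine sharpening: by translation invariance the two halves of the symmetric difference have equal measure, so $F(x)=|U|-\tfrac{1}{2}M_U(x)$ exactly, positivity holds whenever $M_U(x)<2|U|$, and the conclusion improves to $r(U)\ge 2|U|/M^{*}(U)$. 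This factor of $2$ is precisely what the paper's Cauchy--Schwarz step discards (the quantity it estimates is identically $-\tfrac{1}{2}M_U(x)$), and it is exactly the factor the paper concedes in its rectangle example, where its bound is described as optimal only ``up to a factor of 2.'' Your bookkeeping (nonempty overlap yields $x=(z+x)-z\in U-U$, open versus closed balls in the supremum, the vacuous case $M^{*}(U)=\infty$) is also handled correctly; it is in fact tighter than the paper's intermediate assertion $r(U)\ge\sup\{|x|:M_U(x)<|U|\}$, which as stated conflates membership of individual points in $U-U$ with containment of a ball of that radius.
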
 

\vskip.125in 

In addition to this result, we will study the Banach-Steinhaus Process which consists 
of selecting a compact, nonempty $K_0 \subseteq \mathbb{R}^d$ and iteratively defining  $K_n=\frac{1}{2}(K_{n-1}-K_{n-1})$ for all $n \geq 1$. We show that under the Hausdorff metric on the space of nonempty compact subsets of $\mathbb{R}^d$, this sequence converges to the convex hull of $K_1$, denoted by $Conv(K_1)$, and establish some properties of this convergence. In this context, $r(K_0)$ is the diameter of the largest closed ball about the origin that is contained in $K_1$, and more generally, $r(K_n)$ is the diameter of the largest closed ball about the origin that is contained in $K_{n+1}$.

In particular, when $K_0$ has positive Lebesgue measure, $\lim_{n \to \infty} K_n$ is the closed unit ball for a unique norm on $\mathbb{R}^d$. In this case, in 1-dimension, we also show that $1_{K_n}$ (the indicator function of $K_n$) converges to $1_{Conv(K_1)}$ in $L^1(\mathbb{R}^1)$, so that 
$$\int_{K_n} g dx \to \int_{Conv(K_1)} g dx$$ for any $g \in L^{\infty}(Conv(K_1))$, and, in particular, 
$$\lim_{n \to \infty} Vol(K_n)=Vol(Conv(K_1)).$$ 

Such $L^1$-convergence does not generally hold in the measure zero or higher dimensional cases and quantitative bounds on the rate of convergence based on the diameter $D$ and $r(K_0)$ of $K_0$ are given.

In dimensions $d >1$, if $K_0$ has positive Lebesgue measure, and $Star_n$ denotes the largest star-convex subset with respect to the origin that is contained in $K_n$, then we also show $Star_n \to Conv(K_1)$ in the Hausdorff metric. Thus for every continuous function $f: Conv(K_1) \to \mathbb{R}$ there exists $m \leq M$ such that for any $\epsilon > 0$, we have $$[m+\epsilon, M-\epsilon] \subseteq f(K_n) \subseteq [m, M]$$ for all large enough $n$.

\section{Some initial observations}

\begin{remark} By the Jordan Hypersurface separation theorem (see \cite{GuilleminPollack1974}), a nonempty, smooth compact, connected $(d-1)$ dimensional submanifold $H$ of $\mathbb{R}^d$ has $\mathbb{R}^d-H$ consist of two (path) components, one of which is bounded (the "inside" of $H$) and one of which is unbounded (the "outside" of $H$). Furthermore, the closure of the "inside", is a compact, connected, smooth manifold with boundary $U$, whose topological boundary $\partial U$ is $H$. \\

It is clear in this case, that elements in the symmetric difference $$(U-x)\backslash U \cup U \backslash (U-x)$$ are a subset of $\{y \in \mathbb{R^d} | d(y, \partial U) \leq |x| \}$ as to cross from the "inside" to the "outside" or vice versa you have to cross the separating hypersurface $H=\partial U$. \\

It follows, in the language of Theorem~\ref{steinhausradiusthm}, that 
$$M_U(x) \leq | \{y \in \mathbb{R}^d | d(y, \partial U) \leq |x| \} |.$$ 

\vskip.125in 

Now recall that for any $0 \leq m \leq d$, the $m$-dimensional upper-Minkowski content of $A \subset \mathbb{R}^d$, is defined as 
$$M^{*m}(A)=\limsup_{|x| \to 0^+} \frac{|\{ y | d(y,A) < |x| \}|}{\alpha(d-m)|x|^{d-m}}.$$ Here $\alpha(k)$ is the volume of the $k$-dimensional Euclidean ball of radius $1$. \\

Thus we get that $$\limsup_{|x| \to 0^+} \frac{M_U(x)}{|x|} \leq \limsup_{|x| \to 0^+} \frac{ | \{ y | d(y, \partial U) \leq |x| \} |}{|x|}=2M^{*(d-1)}(\partial U)$$ 
where we have used that $\alpha(1)=2$. (Note also $\{ y \in \mathbb{R}^d | d(y, \partial U)=|x| \}$ will consist of the union of two diffeomorphic copies of $\partial U$ for small $|x|$ and hence have Lebesgue measure zero. This follows from the tubular neighborhood theorem and the fact that in such a neigbourhood of $\partial U$, any vector $y$ can be written as the sum of the closest element $z$ in $\partial U$ to $y$ and an element $w$ in the normal space $N_y(\partial U)$ with $d(y, \partial U)=|w|$. See \cite{GuilleminPollack1974}).

Since $H=\partial U$ is a closed set, and locally the graph image of a $C^1$, hence Lipschitz, function over $\mathbb{R}^{d-1}$, the $(d-1)$ dimensional Minkowski content of $H=\partial U$ agrees with the $(d-1)$-dimensional Hausdorff measure of $\partial U$ (see Theorem 3.2.39 in \cite{FedererHerbert1969}) and 
we may conclude:
$$\limsup_{|x| \to 0^+} \frac{M_U(x)}{|x|} \leq 2{\mathcal H}^{(d-1)}(\partial U) < \infty.$$

Since $M_U(x) \leq 2|U| < \infty$, it follows that for any $\gamma > 0$, if we let $\epsilon(\gamma) > 0$ be such that $0 < |x| < \epsilon(\gamma)$ implies 
$$\frac{M_U(x)}{|x|} < 2{\mathcal H}^{(d-1)}(\partial U) + \gamma,$$ we will then have 
$$M^*(U)=\sup_{|x| > 0} \frac{M_U(x)}{|x|} \leq max(2{\mathcal H}^{(d-1)}(\partial U) + \gamma, \frac{2|U|}{\epsilon(\gamma)})$$ will be finite in this case.

On a more refined level, we have from the Weyl tube formula (see \cite{Weyl1939}, \cite{Gray2004}), the following equality (that holds for all $r$ small enough to guarantee that $\{ y \in \mathbb{R}^d,  d(y, \partial U)=s\}$ is still a smoothly embedded hypersurface for all $0 \leq s \leq r$):
$$
| \{y \in \mathbb{R}^d | d(y, \partial U) \leq r \} | = 2r \sum_{c=0}^{[\frac{d-1}{2}]} \frac{k_{2c}(P)r^{2c}}{(3)(5)(7)\dots(2c+1)}
$$
where $P=\partial U$ with the Riemannian metric induced from the ambient dot product, 
$$k_0(P)=Vol(P), k_2(P)=\frac{1}{2} \int \tau dP$$ where $\tau$ is the scalar curvature of $P$ and in general $k_{2c}(P)$ are integrals of quantities involving more complicated curvature functions of $P$. By general Gauss-Bonnet theorems, when $d=2p+1$ is odd, the highest 
$$k_{2p}(P)=(2 \pi)^p \chi(P)$$ where $\chi(P)$ is the Euler characteristic of $P$ as $P=\partial U$ is compact without boundary and even dimensional in this case (see \cite{Gray2004}). Let $$r_c=\inf \{ s > 0 | \{ y \in \mathbb{R}^d | d(y, \partial U)=s \}  \text{ is not a smoothly embedded submanifold of } \mathbb{R}^d \}.$$ Then $r_c > 0$ by the tubular neighborhood theorem and the Weyl tube formula holds for all $r < r_c$. Often $r_c$ is the smallest distance of which some point in $\mathbb{R}^d$ has more than one (i.e. a nonunique) closest point on $\partial U$ of that distance. Using the Weyl tube lemma gives us the following upper bound for the quantity $M^*(U)$:
$$
M^*(U) \leq \sup_{r > 0} \frac{ |\{ y \in \mathbb{R}^d | d(y, \partial U) \leq r \}}{r} \leq max(\sup_{0 < r < r_c} 2\sum_{c=0}^{[\frac{d-1}{2}]} \frac{k_{2c}(P)r^{2c}}{(3)(5)(7)\dots(2c+1)}, \frac{2|U|}{r_c})
$$
where $[\frac{d-1}{2}]$ is the round-down of $\frac{d-1}{2}$.
This gives a useful quantitative upper bound for $M^*(U)$ which then yields a similar lower bound for the Steinhaus radius $r(U)$ via theorem~\ref{steinhausradiusthm}.
\end{remark}

\begin{example}
Let $\partial U$ be a smooth Jordan curve (smoothly embedded circle) in $\mathbb{R}^2$ and $U$ be the closure of its inside. Then $k_0(\partial U)$ is the arclength of $\partial U$ and hence the perimeter length of $U$ and the Weyl tube lemma mentioned in the previous remark yields:
$$
M^*(U) \leq max(2 Perimeter(U), \frac{2Area(U)}{r_c})
$$

\end{example}

\begin{example}
Let $\partial U$ be a closed surface of genus $g$ smoothly embedded in $\mathbb{R}^3$ and let $U$ be the closure of its inside. Then $k_0(\partial U)$ is the surface area of the boundary of $U$,$k_2(\partial U) = \frac{1}{2} \int K dVol$ where $K$ is the Gaussian curvature of $\partial U$. Thus $k_0(\partial U)$ equals $ 2\pi \chi(\partial U) = 2\pi (2-2g)$ by the Gauss-Bonnet theorem. This yields:
$$
M^*(U) \leq max(\sup_{ 0 < r < r_c} (2 (SurfaceArea(\partial U) +\frac{ 2\pi(2-2g)r^2}{3}), \frac{2Vol(U)}{r_c})
$$
and so for $g \geq 1$ (non-spheres) we have:
$$
M^*(U) \leq max(2 SurfaceArea(\partial U), \frac{2Vol(U)}{r_c})
$$
while for (arbitrary smooth embeddings of) spheres ($g=0$) we have:
$$
M^*(U) \leq max(2 (SurfaceArea(\partial U) + \frac{4\pi r_c^2}{3}), \frac{2Vol(U)}{r_c})
$$
\end{example}

To move beyond domains $U$ with smooth boundaries, we can also consider convex bodies, i.e. compact, convex subsets of $\mathbb{R}^d$ with nonempty interior. It is a basic fact that such bodies are homeomorphic to the closed unit ball and so their boundaries are homeomorphic to the $d-1$-dimensional sphere $S^{d-1}$ but they need not be smoothly embedded submanifolds as is the case for example for $n$-gons embedded in the plane.

In this case of convex bodies (with boundary that is not necessarily smooth), there is a formula similar to the Weyl tube formula for the volume of a tube about $\partial U$ which is called Steiner's formula (see Chapter 10 of \cite{Gray2004}).
Some cases of Steiner's formula are: 

If $B$ is convex body in the plane and $B_r$ is the set of points of distance $\leq r$ to $B$ then for all $r \geq 0$:
$$
Area(B_r)=Area(B) + length(\partial B) r + \pi r^2,
$$
$$
length(\partial B_r) = length(\partial B) + 2\pi r.
$$
and if $B$ is a convex body in $\mathbb{R}^3$ we have: 
$$
Volume(B_r) = Volume(B) + Area(\partial B) r  + \frac{1}{2} k_1(\partial B)r^2 + \frac{4 \pi r^3}{3},
$$
$$
Area(\partial B_r) = Area(\partial B) + k_1(\partial B) r + 4\pi r^2.
$$
However note that $B_r-B$ is only the "outer" half-tube about $\partial B$ and not the full tube about $\partial B$. In fact $B_r$ is the Minkowski sum of $B$ with a closed ball of radius $r$ about the origin and so is still convex so that these Steiner formulas hold for all positive $r$ unlike the more general Weyl tube formulas. However similar formulas for the volume of the "inner" half-tube do breakdown at some $r=r_c$ as before.

\begin{remark} (Sharp examples) Let $U=[0,\ell_1] \times [0, \ell_2]$, $\ell_1 \geq \ell_2 >0$, be a rectangle in the plane. For $x=(x_1,x_2)$ with $|x_1| \leq \ell_1, |x_2| \leq \ell_2$, it is a direct simple computation to see that 
$$M_U(x)=|\{(U-x) \backslash U \cup U \backslash (U-x)\}|=2(\ell_1 |x_2| + \ell_2 |x_1|-|x_1x_2|).$$ 

\vskip.125in 

In general, $M_U(x)=M_U(y)$ where $y=(y_1,y_2)$ has $y_i = \min(\ell_i,|x_i|), i=1,2$. 

\vskip.125in 

It then follows that 
$$M^*(U)=\sup_{|x| > 0} \frac{M_U(x)}{|x|}=\sup_{x \in U-\{0\}} \frac{2(\ell_1 |x_2| + \ell_2 |x_1|-|x_1x_2|)}{|x|}.$$

\vskip.125in 

When restricted, to the line of slope $0 \leq m < \infty$ in $U$, the quantity in the supremum becomes $\frac{2(\ell_1 m + \ell_2 - m|x_1|)}{\sqrt{1+m^2}}$ for $x_1 \neq 0$ whose supremum on this line segment is then $\frac{2(\ell_1,\ell_2)\cdot (m,1)}{||(m,1)||}$. This is then maximized as we vary $m$, when $(m,1)$ is in the same direction as $(\ell_1,\ell_2)$ giving $M^*(U)=2(\sqrt{\ell_1^2 + \ell_2^2})$. 
Theorem~\ref{steinhausradiusthm} then says $r(U) \geq \frac{\ell_1 \ell_2}{2\sqrt{\ell_1^2+\ell_2^2}}$ which is optimal (up to a factor of 2) when $\ell_1 >> \ell_2$ as $U-U=[-\ell_1,\ell_1] \times [-\ell_2,\ell_2]$.

Also note in this example, that 
$$\limsup_{|x| \to 0} \frac{M_U(x)}{|x|}=\sup_{|x| > 0} \frac{M_U(x)}{|x|}=2\sqrt{\ell_1^2 + \ell_2^2}$$ coincide.

\end{remark} 




Due to the relationship of $\limsup_{|x| \to 0^+} \frac{M_U(x)}{|x|}$ and the $(d-1)$-dimensional Hausdorff measure of $\partial U$ illustrated in the previous discussions, it is clear that if we perturb the closed unit ball by a small fractal perturbation of its boundary, we can get a set $U$ with very large, or even infinite $M^*(U)$ due to the poorly behaved boundary set even though $r(U)$ would not have been significantly changed. This indicates our bound is far from sharp when $\partial U$ deviates from $d-1$-dimensional behavior significantly and motivates some modifications to theorem~\ref{steinhausradiusthm} which we discuss now.

The point is that if $U=U_1 \cup U_2$, then it is obvious that $r(U) \geq r(U_i)$, $i=1,2$. With this in mind, we offer the following simple modification of Theorem \ref{steinhausradiusthm} that it is more stable against badly behaved boundary issues. 

\begin{theorem} \label{mainmodified} Let $U \subset {\Bbb R}^d$ of finite positive Lebesgue measure. Given $S \subset U$, let 
$$M_S(x)= |\{(S-x) \backslash S \cup S \backslash (S-x)\}|$$ and define
$$ M^{*}(S)=\sup_{|x|>0} \frac{M_S(x)}{|x|}.$$ 

Then 
\begin{equation} \label{iso} r(U) \ge \sup_{S \subset U} \frac{|S|}{M^{*}(S)}. \end{equation} 

\end{theorem}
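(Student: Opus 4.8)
The plan is to obtain Theorem~\ref{mainmodified} as an immediate corollary of Theorem~\ref{steinhausradiusthm}, using only the elementary monotonicity of the Steinhaus radius under inclusion that is already flagged in the discussion preceding the statement. The entire content is the observation that passing to a subset $S \subseteq U$ can only shrink the difference set, hence can only shrink the largest centered ball it contains; applying the previously established bound to $S$ itself and then taking a supremum does the rest.

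First I would record the monotonicity step. For any (measurable) $S \subseteq U$ we have $S - S \subseteq U - U$, since every difference $s - s'$ with $s,s' \in S$ is in particular a difference of two points of $U$. Consequently, any closed ball about the origin contained in $S - S$ is also contained in $U - U$, and therefore
$$
r(S) \;=\; \sup_{r \ge 0}\{\, r \mid S-S \text{ contains a closed ball of radius } r \text{ about } 0 \,\} \;\le\; r(U).
$$

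Next I would invoke Theorem~\ref{steinhausradiusthm} one set at a time. Fix a measurable $S \subseteq U$ with $|S| > 0$. Because $S \subseteq U$ and $|U| < \infty$, the set $S$ is itself of finite positive Lebesgue measure, so Theorem~\ref{steinhausradiusthm} applies verbatim to $S$ and yields $r(S) \ge |S|/M^{*}(S)$. Chaining this with the monotonicity inequality above gives
$$
r(U) \;\ge\; r(S) \;\ge\; \frac{|S|}{M^{*}(S)}
$$
for every such $S$. Taking the supremum over all measurable $S \subseteq U$ then produces exactly the claimed bound $r(U) \ge \sup_{S \subseteq U} |S|/M^{*}(S)$.

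There is essentially no hard step here; the only bookkeeping point worth noting is the treatment of the degenerate sets in the supremum. If $|S| = 0$ then $M_S(x) \le 2|S| = 0$ for all $x$, so $M^{*}(S) = 0$ and the ratio $|S|/M^{*}(S)$ is indeterminate; such $S$ contribute nothing and may simply be excluded, so the supremum is effectively taken over the positive-measure subsets to which Theorem~\ref{steinhausradiusthm} legitimately applies. With that caveat the argument is complete.
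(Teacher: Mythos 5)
Your proposal is correct and matches the paper's intended argument: the authors prove Theorem~\ref{steinhausradiusthm} by the convolution/Cauchy--Schwarz argument and obtain Theorem~\ref{mainmodified} from it exactly via the monotonicity $r(U) \ge r(S)$ for $S \subseteq U$ (flagged in the remark preceding the statement), followed by taking the supremum over subsets. Your additional bookkeeping about measurability and the degenerate case $|S|=0$ is a reasonable clarification of a point the paper leaves implicit.
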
 

\vskip.25in 

\section{The Banach-Steinhaus Process}

Let 
$$Cpt_d=\{ A \subseteq \mathbb{R}^d | A \text{ is compact and nonempty }\}.$$

\vskip.125in 

Recall for nonempty $A \subseteq \mathbb{R}^d$ one can define the distance from $x$ to $A$ via 
$$d_A(x)=\inf_{a \in A} \{ ||x-a|| \}.$$ This is a continuous function of $x$ and $d_A(x)=0$ if and only if $x \in \bar{A},$ the closure of $A$.

The Hausdorff distance between $A, B \in Cpt_d$, denoted $d_{\frak{H}}(A,B)$ is defined as 
$$d_{\frak{H}}(A,B)=max(\sup_{b \in B} d_A(b), \sup_{a \in A} d_B(a)).$$

Equivalently, if we denote by $A^{\epsilon} = \{ x \in \mathbb{R}^d | d_A(x) < \epsilon \}$, then $$d_{\frak{H}}(A,B)=\inf \{ \epsilon > 0 | A \subseteq B^{\epsilon} \text{ and } B \subseteq A^{\epsilon} \}.$$

It is well-known that $(Cpt_d, d_{\frak{H}})$ is a complete metric space.

\vskip.125in 

We consider the following process, which we will call the Banach-Steinhaus process on $Cpt_d$. First define the Steinhaus map $S$ via $S(K)=\frac{1}{2}(K-K)$.

We say that $K$ is {\bf symmetric around the origin}, if $K=-K$ and $0 \in K$ and denote 
$$SCpt_d=\{ A \in Cpt_d | A=-A, 0 \in A \}$$ to be the space of compact symmetric sets (under the Hausdorff metric).

We define a Banach-Steinhauss process (next proposition checks well-definedness) as follows. Choose 
$K_0 \in Cpt_d$ and define 
$$K_n=S(K_{n-1})=\frac{1}{2}(K_{n-1}-K_{n-1})$$ inductively for $n \geq 1$. We will show this process converges to the convex hull of $K_1$, i.e., 
$$\lim_{n \to \infty} K_n = Conv(K_1).$$

We collect some of the basic properties of the Steinhaus map and process in the next proposition:

\begin{proposition}
\label{pro:Steinhausmap} For $K \in Cpt_d$, define $S(K)=\frac{1}{2}(K-K)$. Then the following hold. 

\begin{enumerate}
\item For all $K \in Cpt_d$ we have $S(K) \in SCpt_d$.
\item The diameter of $S(K)$ is equal to the diameter of $K$. 
\item If $K \in SCpt_d$, then $S(K)=\frac{1}{2}(K+K) \subseteq Conv(K)$, the convex hull of $K$.
\item Choose $K_0 \in Cpt_d$ and define $K_n=S(K_{n-1})$ inductively for $n \geq 1$ (We refer to this as the Banach-Steinhaus process). Then 
$K_1 \subseteq K_2 \subseteq \dots \subseteq K_n \subseteq Conv(K_1)$ and hence 
$Conv(K_n)=Conv(K_1)$ for all $n \geq 1$. 
\item If $K_n$ is defined as in the last item, then for all $n \geq 1$, 
$$K_n = \{ \sum_{i=1}^N \frac{\alpha_i}{2^{n-1}} x_i | 0 \leq \alpha_i  \text{ integers },\sum_{i=1}^N \alpha_i=2^{n-1}, x_i \in K_1 \}.$$
\item In the Banach-Steinhaus process, $\lim_{n \to \infty} K_n = Conv(K_1)$. In fact, for $$n \geq log_2(d(d+1)),$$ we have $$d_{\frak{H}}(K_n, Conv(K_1)) \leq \frac{Dd}{2^n}$$ where $D$ is the diameter of $K_0$ and $d$ is the ambient Euclidean dimension.
\item A compact set $A \in Cpt_d$ has $S(A)=A$ if and only if $A$ is symmetric and convex.
\item For any $A,B \in Cpt_d$, $d_{\frak{H}}(S(A),S(B)) \leq d_{\frak{H}}(A,B)$. Thus the map $\theta(A)=Conv(S(A))$ is a continuous retraction from $Cpt_d$ to $CSCpt_d$, the space of convex and symmetric compact subsets of $\mathbb{R}^d$.
\end{enumerate}
\end{proposition}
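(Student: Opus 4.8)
The plan is to verify the eight items essentially in the order stated, since the later ones rest on the earlier ones, and to isolate item (6) as the only genuinely quantitative step. For item (1), I would observe that $K-K$ is the image of the compact set $K\times K$ under the continuous map $(a,b)\mapsto a-b$, hence compact and nonempty, and scaling by $\tfrac12$ preserves this; symmetry is immediate from $-(a-b)=b-a\in K-K$, and $0=a-a\in K-K$. For item (2), the key observation is that any $E\in SCpt_d$ satisfies $\mathrm{diam}(E)=2\sup_{u\in E}\|u\|$ (the lower bound from $\|u-(-u)\|=2\|u\|$, the upper bound from the triangle inequality), so applying this to $E=S(K)$ and using $\sup_{u\in S(K)}\|u\|=\tfrac12\sup_{a,b\in K}\|a-b\|$ yields $\mathrm{diam}(S(K))=\mathrm{diam}(K)$. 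For item (3), symmetry of $K$ turns $K-K$ into $K+K$, and every midpoint $\tfrac12(a+b)$ is a convex combination of points of $K$, hence lies in $Conv(K)$.

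Items (4) and (5) are a single induction. Writing $K_{n+1}=\tfrac12(K_n+K_n)$ (valid for $n\ge1$ by items (1) and (3)), the inclusion $K_n\subseteq K_{n+1}$ follows from the identity $x=\tfrac12(x+x)$, while $K_n\subseteq Conv(K_1)$ follows inductively from convexity of $Conv(K_1)$; taking convex hulls of the chain $K_1\subseteq K_n\subseteq Conv(K_1)$ then gives $Conv(K_n)=Conv(K_1)$. The dyadic description in item (5) is proved by the same induction: a point $\tfrac12(u+v)$ with $u,v$ of the asserted form at level $n$ has coefficients that are integers over $2^n$ summing to $2^n$, and conversely any admissible family of nonnegative integer coefficients summing to $2^n$ can be split into two families each summing to $2^{n-1}$ by a greedy bin-filling argument, realizing the point as a midpoint of two level-$n$ points.

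Item (6) is the main obstacle and the only place a real estimate is needed. I would combine item (5) with Carath\'eodory's theorem: any $p\in Conv(K_1)$ can be written as $p=\sum_{i=1}^{d+1}\lambda_i x_i$ with $x_i\in K_1$, $\lambda_i\ge0$, and $\sum\lambda_i=1$. Setting $N=2^{n-1}$, I would round the $\lambda_i$ to integer multiples $\alpha_i/N$ with $\sum\alpha_i=N$ by flooring and distributing the integer deficit $r=N-\sum\lfloor N\lambda_i\rfloor\in\{0,\dots,d\}$ among the largest fractional parts; the resulting point $q=\sum\tfrac{\alpha_i}{N}x_i$ lies in $K_n$ by item (5). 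The crucial point is that the coefficient errors $e_i=\tfrac{\alpha_i}{N}-\lambda_i$ sum to zero, so $p-q=\sum e_i x_i$ can be rewritten as $s(\bar x_+-\bar x_-)$, where $\bar x_\pm\in Conv(K_1)$ are weighted averages and $s=\tfrac12\sum_i|e_i|$; bounding $\|\bar x_+-\bar x_-\|\le\mathrm{diam}(K_1)=D$ and controlling $s$ through the rounding gives $d_{\frak{H}}(K_n,Conv(K_1))\le Dd/2^n$ past the stated threshold, where only the approximation direction of the Hausdorff distance needs estimating since $K_n\subseteq Conv(K_1)$ is exact. Pinning down the constant together with the threshold $\log_2(d(d+1))$ is the delicate part; it amounts to checking that the mesh $1/N$ is fine enough relative to the $d+1$ Carath\'eodory weights.

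For item (7), the direction ``symmetric and convex $\Rightarrow$ fixed'' uses item (3) together with $A\subseteq\tfrac12(A+A)$, while the converse follows from item (6): if $S(A)=A$ then $A$ is symmetric by item (1), and the constant sequence $K_n\equiv A$ must converge to $Conv(K_1)=Conv(A)$, forcing $A=Conv(A)$. For item (8), I would establish that $S$ is $1$-Lipschitz from the standard Minkowski-operation estimates $d_{\frak{H}}(A+C,B+C)\le d_{\frak{H}}(A,B)$, $d_{\frak{H}}(-A,-B)=d_{\frak{H}}(A,B)$, and $d_{\frak{H}}(\tfrac12 X,\tfrac12 Y)=\tfrac12 d_{\frak{H}}(X,Y)$, combined as $d_{\frak{H}}(A-A,B-B)\le 2\,d_{\frak{H}}(A,B)$; composing with the (likewise $1$-Lipschitz) hull map $Conv$ shows $\theta=Conv\circ S$ is continuous, and item (7) gives $\theta|_{CSCpt_d}=\mathrm{id}$ together with $\theta(Cpt_d)\subseteq CSCpt_d$ (the hull of a symmetric set being symmetric), so $\theta$ is the asserted retraction.
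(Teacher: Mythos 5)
Your proposal is correct, and all eight items are handled by sound arguments; the overall skeleton (the items building on one another, Carath\'eodory plus dyadic approximation for item~(6)) matches the paper's, but three of your technical choices genuinely differ, and two of them buy something. In item~(6) the paper rounds $d$ of the $d+1$ Carath\'eodory weights to the \emph{nearest} dyadic mesh point and then forces the last weight $q_0=1-\sum_{i=1}^d q_i$; the threshold $n\ge \log_2(d(d+1))$ exists precisely to keep that forced weight nonnegative, and the error is bounded via $K_1\subseteq \bar B(0,D/2)$ as $\frac{D}{2}\sum_i|q_i-t_i|\le \frac{D}{2}\cdot\frac{2d}{2^n}$. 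Your floor-and-distribute-the-deficit scheme keeps \emph{all} coefficients nonnegative automatically, so no threshold is needed at all, and your sum-to-zero trick (writing the error as $s(\bar x_+-\bar x_-)$ with $\|\bar x_+-\bar x_-\|\le D$) exploits only the diameter rather than the containment in $\bar B(0,D/2)$. Carrying out the computation you flagged as delicate: with the deficit $r\in\{0,\dots,d\}$ assigned to the $r$ largest fractional parts, one gets $\sum_i|e_i|=\frac{2}{N}\bigl(r-\sum_{\mathrm{inc}}f_i\bigr)\le \frac{2r}{N}\bigl(1-\frac{r}{d+1}\bigr)\le\frac{d+1}{2N}$, hence $d_{\frak{H}}(K_n,Conv(K_1))\le \frac{D(d+1)}{2^{n+1}}\le \frac{Dd}{2^n}$ for every $n\ge 1$ and $d\ge 1$ --- so your route actually proves a slightly stronger statement (the bound without any threshold); note the crude estimate $|e_i|\le 1/N$ alone gives only $D(d+1)/2^n$ and would \emph{not} suffice, so the largest-fractional-parts choice you specified is essential. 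In item~(5) you prove both inclusions, including the splitting of integer coefficients summing to $2^n$ into two halves summing to $2^{n-1}$, which the paper dismisses as a ``trivial rearrangement'' (only the forward inclusion is written out there). Finally, in item~(8) the paper gets continuity of $\theta=Conv\circ S$ by realizing $\theta(A)=\lim_n S^{[n]}(A)$ and passing the Lipschitz bound to the limit, whereas you compose the $1$-Lipschitz map $S$ with the $1$-Lipschitz hull map; your version needs the (standard, but unproved in the paper) fact that $Conv$ is $1$-Lipschitz for $d_{\frak{H}}$, while the paper's is self-contained given its item~(6).
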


\begin{proof}
Item 1: As $K$ is compact, so is $K \times K$. As $K-K$ is the continuous image of $K \times K$ under the continuous map $-: \mathbb{R}^d \times \mathbb{R}^d \to \mathbb{R}^d$, it follows that $K-K$ is compact. It is easy to see that $K-K$ is also symmetric around the origin.

Item 2: Suppose the diameter of $K$ is $D=\sup \{||k_1-k_2|| | k_1, k_2 \in K \}$.
It follows by definition that $K-K$ is contained in the closed Euclidean ball of radius $D$ about the origin, thus $S(K)=\frac{1}{2}(K-K) \subseteq \bar{B}(0; \frac{D}{2})$.
Thus $Diam(S(K)) \leq D=Diam(\bar{B}_d(0,\frac{D}{2}))$.

As $K$ is compact, there exists $k_1, k_2 \in K$ such that $||k_1-k_2||=D$. 
Then $u=\frac{1}{2}(k_1-k_2)$ and $-u=\frac{1}{2}(k_2-k_1)$ lie in $S(K)$ and have distance $D$ apart so $Diam(S(K)) \geq D$ and so we conclude $Diam(S(K))=D=Diam(K)$.

Item 3: The convex hull $Conv(K)$ in general consists of all convex combinations of elements of $K$ i.e. $Conv(K)=\{ \sum_{i=1}^N t_i x_i | 0 \leq t_i, \sum_{i=1}^N t_i=1, x_i \in K \}$. In particular $Conv(K)$ contains $\frac{1}{2}(K+K)=\{ \frac{1}{2}k_1 + \frac{1}{2}k_2 | k_1, k_2 \in K\}$. When $K$ is symmetric, $K=-K$ so $S(K)=\frac{1}{2}(K-K) = \frac{1}{2}(K+K) \subseteq Conv(K)$.

Item 4: Thus if $K$ is symmetric, $K \subseteq S(K) \subseteq Conv(K)$ forces $Conv(S(K))=Conv(K)$. In a Banach-Steinhaus process $K_0, K_1, K_2, \dots$, as $K_1$ is symmetric, this forces $Conv(K_n)=Conv(K_1)$ for all $n \geq 1$.

Item 5: We prove 
$$K_n = \left\{ \sum_{i=1}^N \frac{\alpha_i}{2^{n-1}} x_i | 0 \leq \alpha_i \text{ integers },\sum_{i=1}^N \alpha_i=2^{n-1}, x_i \in K_1 \right\}$$
by induction on $n$. The case $n=1$ is trivial so assume it has been proven true for some $n \geq 1$ and we will show the inductive step to obtain truth for the $n+1$ case.

As $K_n$ is symmetric as $n \geq 1$, we have $K_{n+1}=S(K_n)=\frac{1}{2}(K_n + K_n)$ and so using the induction hypothesis, the typical element of $K_{n+1}$ will be of the form 
$$\frac{1}{2} \left(\sum_{i=1}^N \frac{\alpha_i}{2^{n-1}}x_i + \sum_{i=1}^N \frac{\beta_i}{2^{n-1}}x_i' \right)$$ for integers $0 \leq \alpha_i, \beta_i$ with
$\sum_{i=1}^N \alpha_i = 2^{n-1} = \sum_{i=1}^N \beta_i$ and $x_i, x_i' \in K_1$. A trivial rearrangement shows this is indeed the desired form for $K_{n+1}$ and finishes the induction.

Item 6: Let $D=Diam(K_0)$. We will obtain an upper bound on $d_{\frak{H}}(K_n, Conv(K_1))$ when $n \geq 1$ in terms of $D, n$ and the ambient dimension $d$.

Let $y \in Conv(K_1)$. As we are in $\mathbb{R}^d$, Caratheodory's theorem, says that $y$ can be expressed as a convex combination of at most $d+1$ elements of $K_1$. 
Thus $y=\sum_{i=0}^d t_i x_i$ with $0 \leq t_i$ real numbers and $\sum_{i=0}^d t_i=1$, $x_i \in K_1$. Without loss of generality, let $t_0$ be the largest of the $t_i$ and so $t_0 \geq \frac{1}{d+1}$.

By item 5, the elements of $K_n$ are similar convex combinations but with the $t_i$ constrained to be nonnegative rational numbers whose denominator is $2^{n-1}$. These rationals mesh the unit interval $[0,1]$ into segments of equal length $\frac{1}{2^{n-1}}$ and so we may select for $1 \leq i \leq d$ such a $q_i$ with $|q_i - t_i| \leq \frac{1}{2^n}$ by taking the nearest mesh point $q_i$ to $t_i$. For the final $q_0$ we are forced to use $q_0=1-\sum_{i=1}^d q_i$.

Note that since 
$$\sum_{i=1}^d t_i = 1 - t_0 \leq 1-\frac{1}{d+1},$$ we have 
$$\sum_{i=1}^d q_i \leq \sum_{i=1}^d t_i + \frac{d}{2^n}$$ will be $\leq 1$ so that $q_0 \geq 0$ as long as $\frac{d}{2^n} \leq \frac{1}{d+1}$ i.e. $n \geq log_2(d(d+1))$.

With this assumption, then we can find a point $x=\sum_{i=0}^d q_i x_i \in K_n$ such that 
$$d(x,y) \leq \sum_{i=0}^d |q_i-t_i| |x_i| \leq \frac{D}{2} \sum_{i=0}^d |q_i-t_i|$$ as 
$K_1 \subseteq \bar{B}_d(0,\frac{D}{2})$.

\vskip.125in 

Thus 
$$d(x,y) \leq \frac{D}{2} \left(|t_0 - q_0| + d \frac{1}{2^n}\right) \leq \frac{D}{2}(2d \frac{1}{2^n})= \frac{Dd}{2^n}.$$ It follows that for any $y \in Conv(K_1)$, $d_{K_n}(y) \leq \frac{Dd}{2^n}$. As $K_n \subseteq Conv(K_1)$, this means $d_{\frak{H}}(K_n, Conv(K_1)) \leq \frac{Dd}{2^n}$ as long as $n \geq log_2(d(d+1))$. This establishes 
$$\lim_{n \to \infty} K_n = Conv(K_1).$$

Item 7: If $K_1$ is compact, symmetric and convex then $K_1 \subseteq S(K_1) \subseteq Conv(K_1)=K_1$ forces $S(K_1)=K_1$ as desired. Conversely if $S(K)=K$ it follows that $K$ is symmetric and that $K_n=S(K_{n-1})=\dots=K_1$ for all $n$ and so by item 6, $K_1=\lim_{n \to \infty} K_n = Conv(K_1)$ is convex.

Item 8: Suppose $\epsilon > 0$ is such that $B \subseteq A^{\epsilon}$ and $A \subseteq B^{\epsilon}$. It is then direct to show for any $a_1, a_2 \in A$, there exists $b_1, b_2$ in $B$ such that $d(b_i, a_i) < \epsilon$ and so $d(\frac{a_1-a_2}{2},\frac{b_1-b_2}{2}) < \epsilon$ which shows that $d_{\frak{H}}(S(A),S(B)) \leq d_{\frak{H}}(A,B)$ and establishes the continuity of the Banach-Steinhaus map $S: Cpt_d \to Cpt_d$. As we have seen from the previous items, that 
$\theta(A)=Conv(S(A))=lim_{n \to \infty} S^{[n]}(A)$ where $S^{[n]}$ is the n-fold composite of $S$ with itself, it follows that $d_{\frak{H}}(\theta(A),\theta(B)) \leq d_{\frak{H}}(A,B)$ and so $\theta$ is also continuous and maps $Cpt_d$ to $CSCpt_d$ and is the identity on the latter by Item 7. This is the definition of a topological retraction.

\end{proof}

\subsection{Banach Steinhaus process in dimension 1}

\begin{example}[2 points in $\mathbb{R}^1$]
\label{example:2points}
Let $K_0$ be a two point set of diameter $D$ in $\mathbb{R}^1$ and consider the Banach Steinhaus Process $K_n=S^{[n]}(K_0)$. Then it is easy to compute that

$$K_1 = \left\{ -\frac{D}{2}, 0, \frac{D}{2} \right\} = \frac{D}{2} \{-1,0,1\},$$
$$K_2 = \frac{D}{2}\{ -1,-\frac{1}{2},0,\frac{1}{2},1\},$$
and, more generally,
$$K_n = \frac{D}{2} \left\{ \text{Set of rational numbers in } [-1,1] \text{ with denominator } 2^{n-1} \right\}.$$

Since $Conv(K_1)=[\frac{-D}{2}, \frac{D}{2}]$ we have $d_{\frak{H}}(K_n,Conv(K_1))=\frac{D}{2^{n+1}}$ showing that the bound in item 5 of the proposition~\ref{pro:Steinhausmap} is sharp up to a factor of $2$.
\end{example}

\begin{remark}
Notice that if $K=\cup_{i=1}^n A_i \subseteq \mathbb{R}^d$ then 
$$ S(K)=\bigcup_{i=1}^n S(A_i) \cup \bigcup_{1 \leq i \neq j \leq n} \frac{1}{2}(A_i-A_j).$$
\end{remark}

\begin{example}[Important Example]
\label{example:important}
Let $r, D, \delta$ be nonnegative real numbers such that $\delta \leq r < D, r > 0$.
(It is important we allow $\delta=0$ for later application).
Then 
$$K_1(r,D,\delta)=\left[-\frac{D}{2}, -\frac{D}{2}+\delta \right] \cup \left[-\frac{r}{2}, \frac{r}{2} \right] \cup \left[\frac{D}{2}-\delta,\frac{D}{2}\right]$$ is a compact symmetric subset of $\mathbb{R}^1$, with diameter $D$, consisting of the union of $3$ closed intervals $A_1,A_2$ and $A_3$ where the middle interval has diameter $r$ and the two outer intervals have diameter $\delta$. Note $Conv(K_1(r,D,\delta))=[-\frac{D}{2},\frac{D}{2}]$ and that these intervals are only disjoint if $r +2\delta < D$. Indeed when $r + 2\delta \geq D, K_1(r,D,\delta_1)=[-\frac{D}{2},\frac{D}{2}]$.

Using the remark before this example and the observation that if $A$ is a closed interval of length $L$ then $S(A)=[-\frac{L}{2},\frac{L}{2}]$ we compute: 
$$K_2(r,D,\delta)=S(K_1(r,D,\delta))=[-\frac{r}{2},\frac{r}{2}] \cup I_1 \cup -I_1 \cup I_2 \cup -I_2$$
where we have used that $\delta \leq r$ and $I_1=[\frac{D}{2}-\delta,\frac{D}{2}]$,
$I_2=[\frac{D-r}{4}-\frac{\delta}{2},\frac{D+r}{4}]$. The length of $I_1$ and $I_2$ are $\delta$ and $\delta_1=\frac{\delta+r}{2} > 0$ (as $r>0$) respectively. Thus
$$
K_2(r,D,\delta) = K_1(r,D,\delta) \cup K_1(r, \frac{D+r}{2}, \delta_1) 
\supseteq K_1(r,D,\delta) \cup K_1(r, \frac{D+r}{2}, \delta).
$$
Let us focus on the containment: 
$$
K_2(r,D,\delta) \supseteq K_1(r, \frac{D+r}{2}, \delta_1).
$$
Let $K_n(r,D,\delta)=S^{[n-1]}(K_1(r,D,\delta))$ as usual. Using the fact that $A \subseteq B$ implies $S(A) \subseteq S(B)$ and $\delta_1 \geq \delta$, we iterate to see that
$$
K_3(r,D,\delta) \supseteq K_1(r, \frac{D+3r}{4}, \delta_1)
$$
and more generally that
$$
K_n(r,D,\delta) \supseteq K_1(r, \frac{D+(2^{n-1}-1)r}{2^{n-1}}, \delta_1)
$$
Thus once $n$ is big enough so that $r+2\delta_1 \geq \frac{D+(2^{n-1}-1)r}{2^{n-1}}$ we have that $K_n(r,D,\delta) \supseteq [-\frac{D+(2^{n-1}-1)r}{2^{n-1}},\frac{D+(2^{n-1}-1)r}{2^{n-1}}]$. Any $n \geq log_2(\frac{D-r}{\delta_1})$ satisfies this so in particular any $n \geq log_2(\frac{2D}{r})$. Let $n_0$ be the roundup of $log_2(\frac{D}{r})$ then $n=n_0+1$ works. As $K_{1+n_0}(r,D,\delta)$ also contains $K_1(r,D,\delta)$, it follows then that 
$$
K_{1+n_0}(r,D,\delta) \supseteq K_1(t_0r+(1-t_0)D, D, \delta).
$$
where $\frac{1}{2} \leq t_0=1-\frac{1}{2^{n_0}}  < 1$. (Note $n_0 \geq 1$ as $D > r$.)
Iterating, one concludes:
$$
K_{1+2n_0}(r,D,\delta) \supseteq K_{1+n_0}(t_0r+(1-t_0)D, D, \delta) \supseteq K_1(t_0^2r+(1-t_0^2)D, D, \delta)
$$
and generally for $\ell \geq 1$, we have:
$$
K_{1+\ell n_0}(r,D,\delta) \supseteq K_1(t_0^{\ell}r+(1-t_0^{\ell})D, D, \delta)
$$

Fix $0 < \epsilon < \frac{D}{2}$. 
When $\ell$ is large enough so that $t_0^{\ell}r+(1-t_0^{\ell})D \geq D-2\epsilon$, it follows that $$[-\frac{D}{2}+\epsilon,\frac{D}{2}-\epsilon] \subseteq K_{1+\ell n_0}(r,D,\delta) \subseteq [-\frac{D}{2},\frac{D}{2}].$$ It suffices to choose $\ell$ large enough so that $t_0^{\ell} \leq \frac{2\epsilon}{D-r}$ which is the case when $\ell \geq \frac{log_2(\frac{D-r}{2\epsilon})}{|log_2(t_0)|}$. Let $\ell_0$ be the round up of $\frac{log_2(\frac{D}{2\epsilon})}{|log_2(t_0)|}$.

Then for all $n > \ell_0 n_0$ we have $[-D/2+\epsilon, D/2-\epsilon] \subseteq K_n(r,D,\delta) \subseteq [-D/2,D/2]$.

\vskip.125in 

In particular, 
$$\lim_{n \to \infty} Vol(K_n(r,D,\delta))=D.$$

\end{example}

\begin{theorem}[$1$ Dimensional Banach-Steinhaus process]
\label{theorem:1DBSProcess}
Let $K_0$ be a compact subset of $\mathbb{R}^1$ with diameter $D$, positive Lebesgue measure and $r=r(K)$ as defined in Theorem~\ref{steinhausradiusthm}. Let $K_n=S^{[n]}(K_0)$ denote the $n$th stage of the Banach-Steinhaus process.

If $r=D$, then $K_n=[-\frac{D}{2},\frac{D}{2}]$ for all $n \geq 1$ so assume $r < D$.

Fix $0 < \epsilon < \frac{D}{2}$. 
Let $n_0$ be the round up of $log_2(D/r)$, $t_0=1-\frac{1}{2^{n_0}}$ and let $\ell_0$ be the round up of $\frac{log_2(\frac{D}{2\epsilon})}{|log_2(t_0)|}$. 

For any $n > \ell_0 n_0$ we have:

$$
[-D/2+\epsilon, D/2-\epsilon] \subseteq K_n \subseteq [-D/2,D/2]
$$
and $||1_{K_n} - 1_{[-\frac{D}{2},\frac{D}{2}]}|| \leq 2 \epsilon$.

In particular $\lim_{n \to \infty} Vol(K_n)=D$ and $\lim_{n \to \infty} 1_{K_n}=1_{[-\frac{D}{2},\frac{D}{2}]}$ in $L^1(\mathbb{R}^1)$.

Then if $K_0^{[2^{n-1}]}=K_0+\dots+K_0$ denotes the $2^{n-1}$-fold sumset of $K_0$ with itself, we have
$$
[-2^{n-1}(D-2\epsilon), 2^{n-1}(D-2\epsilon)] \subseteq K_0^{[2^{n-1}]}-K_0^{[2^{n-1}]} \subseteq [-2^{n-1}D, 2^{n-1}D].
$$
\end{theorem}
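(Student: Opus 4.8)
The plan is to reduce the sumset statement to the sandwich bound $[-D/2+\epsilon,\, D/2-\epsilon] \subseteq K_n \subseteq [-D/2,\, D/2]$ (valid for $n > \ell_0 n_0$, and already in hand from the preceding analysis) by exhibiting $K_n$ as a rescaled difference of iterated Minkowski sumsets of $K_0$. The central claim I would isolate is the identity
$$K_n = \frac{1}{2^n}\left(K_0^{[2^{n-1}]} - K_0^{[2^{n-1}]}\right) \qquad \text{for all } n \geq 1,$$
where $K_0^{[m]} = K_0 + \dots + K_0$ ($m$-fold). Granting this, so that $2^n K_n = K_0^{[2^{n-1}]} - K_0^{[2^{n-1}]}$, I would simply multiply the sandwich bound through by $2^n$. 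The two numerical simplifications $2^n(D/2) = 2^{n-1}D$ and $2^n(D/2 - \epsilon) = 2^{n-1}(D - 2\epsilon)$ then convert the scaled inclusions into exactly the asserted two-sided containment, so the entire statement follows formally once the identity is secured.

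First I would prove the identity by induction on $n$. The base case $n=1$ is immediate, since $K_1 = S(K_0) = \frac{1}{2}(K_0 - K_0) = \frac{1}{2}\left(K_0^{[1]} - K_0^{[1]}\right)$. For the inductive step, assuming the formula at stage $n$ and using that $K_n$ is symmetric (so $K_{n+1} = \frac12(K_n - K_n)$), I would write
$$K_{n+1} = \frac{1}{2^{n+1}}\left(\left(K_0^{[2^{n-1}]} - K_0^{[2^{n-1}]}\right) - \left(K_0^{[2^{n-1}]} - K_0^{[2^{n-1}]}\right)\right),$$
regroup the four copies of $K_0^{[2^{n-1}]}$ into two "positive" and two "negative" blocks, and invoke associativity and commutativity of Minkowski addition to collapse $K_0^{[2^{n-1}]} + K_0^{[2^{n-1}]} = K_0^{[2^n]}$. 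This gives $K_{n+1} = \frac{1}{2^{n+1}}\left(K_0^{[2^n]} - K_0^{[2^n]}\right)$, completing the induction.

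The one point I would treat with care is precisely that regrouping, which is also the main obstacle: Minkowski sums and differences do not obey the cancellation law of ordinary arithmetic (e.g. $A - A \neq \{0\}$ in general), so the rearrangement must be justified directly at the level of representatives rather than by symbolic manipulation of the set expressions. Concretely, writing $A = K_0^{[2^{n-1}]}$, every element of $(A - A) - (A - A)$ has the form $a - b - c + d$ with $a,b,c,d \in A$, and this equals $(a+d) - (b+c)$ with $a+d,\, b+c \in A + A = K_0^{[2^n]}$; conversely every element of $(A+A) - (A+A)$ arises this way. This elementwise bijection is what legitimizes the collapse, and once it is in place the scaling argument of the first paragraph is purely mechanical.
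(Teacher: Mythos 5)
Your proposal is correct and takes essentially the same route as the paper: the paper likewise treats the sandwich bounds as settled by the preceding Example~\ref{example:important} (via $S(K_0) \supseteq K_1(r,D,0)$ and $r>0$ from the classical Steinhaus theorem), and disposes of the final sumset claim through the inductive identity $K_n = \frac{1}{2^n}\left(K_0^{[2^{n-1}]} - K_0^{[2^{n-1}]}\right)$ followed by rescaling. Your elementwise justification of the Minkowski regrouping $(A-A)-(A-A)=(A+A)-(A+A)$ is a detail the paper leaves implicit, and it is exactly the right point to be careful about.
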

\begin{proof}
By assumption, $S(K_0)$ contains $K_1(r,D,0)$ as defined in Example~\ref{example:important} and $r > 0$ due to the classical Steinhaus result. Thus all but the last statement follow by the results in that example.

Observe that 
$$S(K_0)=\frac{K_0-K_0}{2}, \ S^{[2]}(K_0)=S(S(K_0)) = \frac{1}{4}(K_0+K_0-K_0-K_0),$$ and, by induction, 
$$K_n=S^{[n]}(K_0)=\frac{1}{2^{n}}(K_0^{[2^{n-1}]}-K_0^{[2^{n-1}]}).$$

\end{proof}

As Example~\ref{example:2points} shows, the equation $\lim_{n \to \infty} Vol(K_n)=D$ does not hold for compact sets $K_0$ of measure $0$ and diameter $D$ even though $K_n$ still converges to $[-\frac{D}{2},\frac{D}{2}]$ relative to the Hausdorff metric. In this example $1_{K_n}$ do converge to $1_{[\frac{-D}{2},\frac{D}{2}]}$ in $L^1(\mathbb{R}^1)$ so $L^1$-convergence does require positive measure though Hausdorff-metric converge does not.

\subsection{Banach-Steinhaus process in higher dimensions}

To extend Theorem~\ref{theorem:1DBSProcess} to higher dimensions, we will now make some definitions common to the theory of convex sets:

\begin{definition}[Quasi-support functions]
\label{defn:sptfunction}
Let $K \in SCpt_d$. 

For every $\theta \in S^{d-1}$, let $$P_{\theta}=\{ t \in \mathbb{R}_{\geq 0} | t \theta \in K \}$$ be the profile of $K$ in the direction $\theta$. 

Let $r(\theta), D(\theta)$ be the largest (respectively smallest) nonnegative real numbers such that 
$$ \left[0,\frac{r(\theta)}{2}\right] \subseteq P_{\theta} \subseteq \left[0, \frac{D(\theta)}{2}\right].$$

Note, by symmetry $P_{\theta}=P_{-\theta}$ and hence $r(\theta)=r(-\theta), D(\theta)=D(-\theta)$. By compactness of $K$, $0 \leq r(\theta) \leq D(\theta) < \infty$ and $\pm \frac{r(\theta)}{2}\theta, \pm \frac{D(\theta)}{2}\theta \in K$ for all $\theta$. Let $r=\inf_{\theta \in S^{d-1}} r(\theta)$ and $D=\sup_{\theta \in S^{d-1}} D(\theta)$. It is clear that $r$ is the diameter of the largest closed Euclidean ball about the origin contained in $K$ and $D$ is the diameter of the smallest closed Euclidean ball about the origin which contains $K$ and that $D$ coincides with the diameter of $K$. 

We say $K$ is star-convex with respect to the origin, if the line segment between any point of $K$ and the origin, is contained in $K$. This happens if and only if $r(\theta)=D(\theta)$ for all $\theta \in S^{d-1}$.
\end{definition}

Let $Star(K)$ denote the largest star-convex (with respect to origin) subset of $K$, then it is easy to see 
$$Star(K)=\bigcup_{\theta \in S^{d-1}} \left\{ t \theta | 0 \leq t \leq \frac{r(\theta)}{2} \right\}.$$

 Note $0 \in Star(K)$ and $Star(K)$ is always path connected (any two points have a path between them in $Star(K)$ obtained by concatenating straight line segments to the origin). $Star(K)$ is closed in $K$ and hence compact. Indeed suppose $t_n \theta_n \to t \theta \in K$ where $t_n, t \geq 0, \theta_n, \theta \in  S^{d-1}$ and $t_n \theta_n \in Star(K)$ for all $n \geq 1$. It follows for any $0 \leq s \leq 1, st_n \theta_n \in Star(K)$ and so $s t \theta$ is also in the closure of $Star(K)$ in $K$. Thus the closure of $Star(K)$ in $K$ is still star-convex with respect to the origin and hence must coincide with $Star(K)$.

\begin{theorem}
Let $K_0 \in Cpt_d$ of diameter $D$ and let $K_n=S^{[n]}(K_0)$ denote the Banach-Steinhaus process. Assume $K_0$ has {\bf positive Lebesgue measure}. For $n \geq 1$, let $Star_n=Star(K_n)$ be the largest star-convex subset with respect to the origin contained in $K_n$.Then 
$$\lim_{n \to \infty} Star_n = Conv(K_1)$$ with respect to the Hausdorff metric. 

In addition, if $r$ denotes the diameter of the largest closed ball about the origin contained in $K_1$, then $r > 0$ and for $n \geq max(log_2(\frac{2Dd}{r}), log_2(d(d+1)))$ we have $$\frac{1}{2} Conv(K_1) \subseteq K_{n+1} \subseteq Conv(K_1).$$

\end{theorem}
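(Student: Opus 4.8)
The plan is to dispatch the two displayed inclusions of the ``In addition'' clause first, since they are quick and already contain the mechanism reused later. The upper inclusion $K_{n+1}\subseteq Conv(K_1)$ is immediate from item~4 of Proposition~\ref{pro:Steinhausmap}. Since $K_0$ has positive Lebesgue measure, the classical Steinhaus theorem shows $K_0-K_0$ contains a ball about the origin, hence so does $K_1=\frac12(K_0-K_0)$; thus $r>0$ and $\bar{B}(0,\frac r2)\subseteq K_1\subseteq K_n$ for every $n\ge 1$. For the lower inclusion, fix $w\in\frac12 Conv(K_1)$, so that $2w\in Conv(K_1)=Conv(K_n)$. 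For $n\ge\log_2(d(d+1))$, item~6 of Proposition~\ref{pro:Steinhausmap} gives $a\in K_n$ with $|2w-a|\le\frac{Dd}{2^n}$; setting $b=2w-a$ we get $|b|\le\frac{Dd}{2^n}\le\frac r2$ once $n\ge\log_2(\frac{2Dd}{r})$, so $b\in\bar{B}(0,\frac r2)\subseteq K_n$. As $K_n$ is symmetric, $K_{n+1}=\frac12(K_n+K_n)$ and $2w=a+b$ exhibits $w\in K_{n+1}$, proving $\frac12 Conv(K_1)\subseteq K_{n+1}$ in the stated range.

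For the Hausdorff convergence $Star_n\to Conv(K_1)$, the key observation is that any convex set containing the origin is star-convex with respect to the origin, so if a convex symmetric $C'$ satisfies $C'\subseteq K_m$ then automatically $C'\subseteq Star(K_m)=Star_m$. Writing $C=Conv(K_1)$ (convex, symmetric, with $C\subseteq\bar{B}(0,\frac D2)$), it therefore suffices to prove the covering statement: for every $\epsilon>0$ there is an $m$ with $(1-\epsilon)C\subseteq K_m$. Indeed, granting this, $(1-\epsilon)C\subseteq Star_m\subseteq C$, and for $y\in C$ the point $(1-\epsilon)y\in(1-\epsilon)C$ obeys $|y-(1-\epsilon)y|=\epsilon|y|\le\epsilon\frac D2$, so $d_{\frak{H}}(Star_m,C)\le\epsilon\frac D2\to 0$; monotonicity $Star_n\subseteq Star_{n+1}$ (from $K_n\subseteq K_{n+1}$) upgrades this to a limit.

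To prove the covering statement I would bootstrap on the ``fill factor''. Let $\rho(\theta)=\max\{t\ge 0: t\theta\in C\}$ be the radial function of $C$, and set $\Lambda=\{\lambda\in[0,1): \lambda C\subseteq K_m \text{ for all large } m\}$. The previous paragraph gives $[0,\tfrac12]\subseteq\Lambda$, and $\Lambda$ is downward closed since $\mu C\subseteq\lambda C$ for $\mu<\lambda$. The crux is the step-up: if $\lambda\in\Lambda$, then every $\mu<\frac{1+\lambda}{2}$ lies in $\Lambda$. Given such a $\mu>\tfrac12$ and $w\in\mu C$, write $w=c\,\rho(\theta)\theta$ with $c=|w|/\rho(\theta)\le\mu$; the case $c\le\tfrac12$ is covered by $\tfrac12\in\Lambda$. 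For $c>\tfrac12$ let $z=\rho(\theta)\theta\in C$, pick (via item~6) a point $\tilde b\in K_{m-1}$ with $|\tilde b-z|\le\delta_{m-1}:=\frac{Dd}{2^{m-1}}$, and set $a=2w-\tilde b=(2c-1)z+(z-\tilde b)$. Since $2c-1\le 2\mu-1<\lambda$ and $\bar{B}(0,\tfrac r2)\subseteq C$ yields the convex absorption estimate
$$(2\mu-1)C+\bar{B}(0,t)\subseteq\Big(2\mu-1+\tfrac{2t}{r}\Big)C,$$
we get $a\in(2\mu-1+\tfrac{2\delta_{m-1}}{r})C$; for $m$ large this scalar is $\le\lambda$, so $a\in\lambda C\subseteq K_{m-1}$. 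Then $\tilde b\in K_{m-1}$ and $2w=a+\tilde b$ give $w\in K_m$, so $\mu\in\Lambda$. As $\sup\Lambda$ cannot be any $\lambda_*<1$ (else $\frac{1+\lambda_*}{2}>\lambda_*$ would lie in $\Lambda$), we conclude $\Lambda\supseteq[0,1)$, which is the covering statement.

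The one delicate point I expect is exactly this step-up for points beyond the halfway radius ($c>\tfrac12$): there $2w$ leaves $C$, so one cannot approximate $2w$ by a nearby point of $K_{m-1}$ as in the easy lower-containment argument. The resolution — pairing the far radial point $z$ of $C$ (approximated to within the Hausdorff error $\delta_{m-1}$ by $K_{m-1}$) against the interior point $(2c-1)z$, and absorbing the error $z-\tilde b$ into the convex scaling via the estimate above using the inner ball $\bar{B}(0,\frac r2)$ — is what forces both the positive-measure hypothesis (so $r>0$) and the geometric-to-arithmetic contraction $\lambda\mapsto\frac{1+\lambda}{2}$. Everything else is bookkeeping: choosing $m$ so that simultaneously $m-1\ge\log_2(d(d+1))$, $m-1\ge N_\lambda$ (the threshold witnessing $\lambda\in\Lambda$), and $\delta_{m-1}$ is small enough that the absorbed scalar stays $\le\lambda$.
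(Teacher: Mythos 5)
Your proof is correct, but the main part takes a genuinely different route from the paper's. The ``in addition'' clause you prove exactly as the paper does (item 6 of Proposition~\ref{pro:Steinhausmap} to approximate $2w$ by $a\in K_n$, the inner ball $\bar{B}(0,\frac{r}{2})\subseteq K_1\subseteq K_n$ to absorb the error $b$, then average in $K_{n+1}=\frac12(K_n+K_n)$). For the convergence $Star_n\to Conv(K_1)$, however, the paper argues radially: for each $u\in K_n$ it restricts the process to the segment $[-u,u]$, where Theorem~\ref{theorem:1DBSProcess} (itself resting on the iteration of Example~\ref{example:important}) shows that after $m_0=m_0(D,r,\epsilon)$ further steps the segment up to radius $|u|-\epsilon$ lies in $K_{n+m_0}$; hence $K_n\subseteq Star_{n+m_0}^{\epsilon}$, and combining with $K_n\to Conv(K_1)$ gives the result. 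You instead prove the stronger covering statement that $(1-\epsilon)\,Conv(K_1)\subseteq K_m$ for all large $m$, by bootstrapping the fill factor $\lambda$ with the step-up $\lambda\mapsto$ anything below $\frac{1+\lambda}{2}$: the far radial point $z$ is approximated within $\frac{Dd}{2^{m-1}}$ by item 6, the error is absorbed into $\lambda\,Conv(K_1)$ via the inner-ball estimate $\alpha C+\bar{B}(0,t)\subseteq(\alpha+\frac{2t}{r})C$, and one averages two elements of $K_{m-1}$; since a symmetric convex subset of $K_m$ automatically lies in $Star_m$, Hausdorff convergence follows. Your argument avoids the one-dimensional machinery entirely, works directly with the convex hull, and yields the cleaner intermediate containment $(1-\epsilon)\,Conv(K_1)\subseteq Star_m$ with trackable thresholds; the paper's radial-slice argument gives direction-by-direction control of the radial functions $r_n(\theta)$ and recycles its 1D theorem, which it needs elsewhere anyway. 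One point to tighten: $\lambda_*=\sup\Lambda$ need not belong to $\Lambda$, so you cannot apply the step-up at $\lambda_*$ itself; instead pick $\lambda\in\Lambda$ with $\lambda>2\lambda_*-1$ (possible since $2\lambda_*-1<\lambda_*$ when $\lambda_*<1$), whence $\sup\Lambda\geq\frac{1+\lambda}{2}>\lambda_*$, the same contradiction. This is an immediate repair, not a gap in the method.
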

\begin{proof}
Recall $K_1 \subseteq K_2 \subseteq \dots \subseteq Conv(K_1)$. So $Star_1 \subseteq Star_2 \subseteq Star_3 \subseteq \dots \subseteq Conv(K_1)$. 
For $n \geq 1$, let $r_n(\theta), D_n(\theta)$ be the quasi-support functions for $K_n$ as defined in definition~\ref{defn:sptfunction}. Note by the classical Steinhaus theorem, we know that $\inf_{\theta \in S^{d-1}} r_1(\theta)=r > 0$ for all $\theta \in S^{d-1}$.
 
Then $0 < r_1(\theta) \leq r_2(\theta) \leq r_3(\theta) \leq \dots$ and $D_1(\theta) \leq D_2(\theta) \leq D_3(\theta) \leq \dots $, and $r_n(\theta) \leq D_n(\theta) \leq \frac{D}{2}$ for all $\theta \in S^{d-1}$.

Let the quasi-support functions for $Conv(K_1)$ be denoted $r_c(\theta)=D_c(\theta)$ and 
$$r_{\infty} = \lim_{n \to \infty} r_n, D_{\infty} = \lim_{n \to \infty} D_n.$$

Apriori we have $0 < r_{\infty}(\theta) \leq D_{\infty}(\theta) \leq D_c(\theta) \leq \frac{D}{2}$ for all $\theta \in S^{d-1}$.

\vskip.125in 

Now note for any $n \geq 1$ and any nonzero $u=|u|\theta(u) \in K_n$ (here $\theta(u)=\frac{u}{|u|} \in S^{d-1}$), we can restrict the Banach-Steinhaus process onto the part of $K_n$ on the line segment between $u$ and $-u$. This will be effectively a compact 1-dimensional space, symmetric about the origin and with diameter $2|u| \leq D$ which contains a line segment of diameter $r$ which is symmetric about the origin. Fix $\epsilon > 0$, Then Theorem~\ref{theorem:1DBSProcess} and its proof show that there is $m_0$ which only depends on $D, \epsilon, r$ such that $Star_{m_0+n}$ is $\epsilon$-close to all such $u$. Thus $Star_n \subseteq K_n \subseteq Star_{n+m_0}^{\epsilon} \subseteq K_{n+m_0}^{\epsilon}$ where $m_0$ only depends on $\epsilon$ (and implicitly $D,r$) but not on $n$.

 From here, it is easy to see that $\lim_{n \to \infty} Star_n = Conv(K_1)$ with respect to the Hausdorff metric. This is because for a given $\epsilon > 0$, we already know from Proposition~\ref{pro:Steinhausmap}, that 
$$K_n \subseteq Conv(K_1) \subseteq K_n^{\epsilon} \subseteq Star_{n+m_0}^{2\epsilon}$$ and 
$$Star_{n+m_0} \subseteq K_{n+m_0} \subseteq Conv(K_1)$$ for all large enough $n$. 
 
For the last part, take $x \in Conv(K_1)$ and choose $n$ such that 
$$n \geq max \left(log_2(\frac{2Dd}{r}), log_2(d(d+1))\right)$$ so that $\frac{r}{2} \geq \frac{Dd}{2^n}$. Then by Item 6 of Proposition~\ref{pro:Steinhausmap}, $x=y+e$ where $y \in K_n$ and $e \in \bar{B}(0,\frac{r}{2}) \subseteq K_1 \subseteq K_n$ i.e. $|e| \leq \frac{r}{2}$. Then $\frac{x}{2} = \frac{1}{2}(y+e) \in \frac{1}{2}(K_n + K_n)=K_{n+1}$. Thus we conclude $\frac{1}{2}Conv(K_1) \subseteq K_{n+1}$.

\end{proof}

\begin{corollary}
Let $K_0 \in Cpt_d$ of diameter $D$ and let $K_n=S^{[n]}(K_0)$ denote the Banach-Steinhaus process. Assume $K_0$ has {\bf positive Lebesgue measure}.  Let $f: Conv(K_1) \to \mathbb{R}$ be continuous then 
there exist $m \leq M$ such that $f(Conv(K_1))=[m,M]$. for any $\epsilon > 0$, for large enough $n$ we have:
$$
[m+\epsilon, M-\epsilon] \subseteq f(K_n) \subseteq [m,M].
$$
\end{corollary}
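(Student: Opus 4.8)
The plan is to dispose of the two easy assertions first and then reduce the substance to the Hausdorff convergence established in the preceding theorem. Since $Conv(K_1)$ is convex it is compact and connected, so its continuous image $f(Conv(K_1))$ is a compact connected subset of $\mathbb{R}$, hence a closed interval; putting $m=\min_{Conv(K_1)}f$ and $M=\max_{Conv(K_1)}f$ gives $f(Conv(K_1))=[m,M]$, which is the first claim. The upper inclusion $f(K_n)\subseteq[m,M]$ is then immediate, because $K_n\subseteq Conv(K_1)$ by Item 4 of Proposition~\ref{pro:Steinhausmap}, so $f(K_n)\subseteq f(Conv(K_1))=[m,M]$.

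The only real content is the lower inclusion, and here the key point is that $K_n$ itself may be \emph{disconnected} (as in Example~\ref{example:important}), so $f(K_n)$ need not be an interval and one cannot run an intermediate-value argument on $K_n$ directly. Instead I would pass to the star-convex core $Star_n=Star(K_n)$, which is path connected (concatenate the straight segments to the origin) and compact. Therefore $f(Star_n)$ is a compact connected subset of $\mathbb{R}$, that is an interval $[m_n,M_n]$, and since $Star_n\subseteq K_n\subseteq Conv(K_1)$ we obtain $[m_n,M_n]=f(Star_n)\subseteq f(K_n)\subseteq[m,M]$.

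It remains to show $m_n\to m$ and $M_n\to M$, which I would deduce from uniform continuity together with the convergence $Star_n\to Conv(K_1)$ proved in the preceding theorem. Fix $\epsilon>0$; we may assume $\epsilon\le\frac{M-m}{2}$, since otherwise $[m+\epsilon,M-\epsilon]$ is empty and nothing is to be proved. By compactness $f$ is uniformly continuous on $Conv(K_1)$, so choose $\delta>0$ with $|f(x)-f(y)|<\epsilon$ whenever $x,y\in Conv(K_1)$ and $|x-y|<\delta$. For all large $n$ we have $d_{\frak{H}}(Star_n,Conv(K_1))<\delta$, so every point of $Conv(K_1)$ lies within $\delta$ of a point of $Star_n$. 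Applying this to a maximizer $x^{*}$ (with $f(x^{*})=M$) produces $s\in Star_n$ with $f(s)>M-\epsilon$, whence $M_n\ge f(s)>M-\epsilon$; applying it to a minimizer gives $m_n<m+\epsilon$ in the same way. Since $[m_n,M_n]$ is an interval with $m_n\le m+\epsilon\le M-\epsilon\le M_n$, it contains $[m+\epsilon,M-\epsilon]$, and therefore $[m+\epsilon,M-\epsilon]\subseteq f(Star_n)\subseteq f(K_n)$ for all large $n$.

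The main obstacle is conceptual rather than computational: recognizing that the possible disconnectedness of $K_n$ forbids a direct connectedness argument, and that the correct remedy is to work inside $Star_n$, whose Hausdorff convergence to $Conv(K_1)$ was exactly the payoff of the preceding theorem. Once that substitution is made, the remainder is a routine uniform-continuity estimate.
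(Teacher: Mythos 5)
Your proof is correct and follows essentially the same route as the paper's: both pass to the compact, path-connected star-convex core $Star_n \subseteq K_n$ to obtain an interval $f(Star_n) \subseteq f(K_n)$, and both combine uniform continuity of $f$ with the Hausdorff convergence $Star_n \to Conv(K_1)$ from the preceding theorem to pin the endpoints within $\epsilon$ of $m$ and $M$. The one small slip is your justification of compactness of $Conv(K_1)$ --- convexity alone does not imply compactness; the paper derives it via Carath\'eodory's theorem as the continuous image of $\Delta_d \times K_1^{d+1}$ --- but this is a standard fact and does not affect the argument.
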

\begin{proof}
$Conv(K_1)$ is compact as it is the continuous image of $\Delta_d \times K_1^{d+1}$ by Caratheodory's theorem. Here $\Delta_d =\{ (t_0,\dots,t_d) | t_i \geq 0, \sum_{i=0}^d t_i=1 \}$ is the standard $d$-dimensional simplex which is compact. $Conv(K_1)$ is also clearly path connected, and so $f(Conv(K_1)) \subseteq \mathbb{R}^1$ is compact and connected and hence of the form $[m,M]$ by the intermediate and extreme value theorems.

Now as $Star_n$ is also compact and path connected for any $n$, and $Star_n \subseteq K_n \subseteq Conv(K_1)$, it follows that $f(Star_n)$ is a closed subinterval of $[m,M]$. As $f$ is uniformly continuous on $Conv(K_1)$ and since $Star_n$ converges to $Conv(K_1)$ in the Hausdorff metric, it follows that $f(Star_n)$ converges to $f(Conv(K_1))$ in the Hausdorff metric. From this and since $f(Star_n) \subseteq f(K_n)$, it follows that for any $\epsilon > 0$, we have 
$$
[m+\epsilon, M-\epsilon] \subseteq f(K_n) \subseteq [m,M]
$$
for all large enough $n$.
\end{proof}

Example~\ref{example:2points} shows that the last corollary does not hold when $K_0$ has measure $0$ even when $f(x)=x$ and we are in one dimension. It is crucial that $K_0$ have positive measure so that $K_1$ contains a ball of positive radius around the origin in order for the Banach-Steinhaus process to yield a sequence of sets "well-approximated" by their star-convex parts.

\section{Kakeya Needle Problem and Blaschke-Santal\'o diagrams}

\begin{definition}
Suppose $K$ is a convex body. For any $r > 0$, $K$ is called a $r$-Kakeya set, if it contains a closed interval of length $r$ in any given direction.
By convexity, this is equivalent to the condition that for any $u \in S^{d-1}$, there exists $k_1, k_2 \in K$ such that $k_1-k_2=ru$.

By convexity, this happens if and only if 
$$\{  su | 0 \leq s \leq r, u \in S^{d-1} \} \subset K-K$$ and hence is equivalent to $r(K) \geq r$.

\vskip.125in 

Thus for convex bodies $K$, $r(K)$, the radius of the largest closed ball centered about the origin contained in $K-K$ is also the largest $r$ for which $K$ is a $r$-Kakeya set.
\end{definition}

Now given a convex body $K$ which is a $r$-Kakeya set, $\frac{1}{r} K$ is a convex body which is a $1$-Kakeya set.

\begin{definition}
A convex body $K \subset \mathbb{R}^2$ is a $r$-Kakeya needle set in the plane, if there is a closed interval of radius $r$ in $K$, that can be continously rotated through 180 degrees within $K$.

Note that the condition of being a $r$-Kakeya needle set is apriori stronger than of being a $r$-Kakeya set.
\end{definition}

J. Pal showed in 1920, that the smallest area convex $1$-Kakeya needle set in the plane is an equilateral triangle of height $1$ and hence area $\frac{1}{\sqrt{3}}$. If the convexity assumption is removed, Besicovitch showed there are 
$1$-Kakeya sets of arbitrarily small measure and in fact even measure zero. Scaling Pal's result, we conclude that any $r$-Kakeya needle convex body $K$ in the plane must have area $|K|$ with  $|K| \geq \frac{r^2}{\sqrt{3}}$, yielding the inequality 
$$
r_{needle} \leq |K|^{0.5} 3^{0.25}
$$
for convex bodies $K$ in the plane where $r_{needle}$ is the largest $r$ for which $K$ is a $r$-Kakeya needle set.

\vskip.25in 

More generally, one may ask questions about the relationship of various quantities as one ranges over all nonempty, compact convex subsets $K$ of the plane. Typical quantities are like the area $A$, in-radius $r_{in}$ (largest radius of a closed disk contained in $K$), the circumradius $R$ (smallest radius of closed disk containing $K$), diameter $D$, Perimeter length $P$ of the boundary $\partial K$ (in case $K$ has nonempty interior) and width $W$ which is the smallest distance between two parallel lines such that $K$ lies between the lines.

The traditional Blaschke-Santal\'o diagram for planar convex bodies describes a region defined by 4 inequalities that constrain the three quantities $(r_{in}, R, D)$ by delineating exactly all the possibilities for $(\frac{r_{in}}{R}, \frac{D}{2R})$ as we range over all convex bodies in the plane, (see \cite{B1915}, \cite{BM2017}, \cite{F2023}).
This region is not convex but has 4 corners where two inequalities are sharp and equal corresponding to the cases where $K$ is a closed interval, a closed ball, an equilateral triangle or a Reuleaux triangle respectively.

Different authors have described similar "Blaschke-Santal\'o" diagrams for different triplets of quantities and these diagrams are often used in the area of shape-optimization. For example the triplets of (area, perimeter, moment intertia in \cite{HGL2023}) and (area, perimeter, Cheeger constant in \cite{F2023}).





\section{Proof of Theorem \ref{steinhausradiusthm} and Theorem \ref{mainmodified}} 

\vskip.125in 

Consider 
$$ F(x)=\int \chi_U(x+y) \chi_U(y) dy$$ and observe that 
$$ F(x)-|U|=\int \left( \chi_U(x+y)-\chi_U(y) \right) \chi_U(y) dy.$$ 

Also note that $F(x)$ is supported on $U-U$. We have

$$ {|F(x)-|U||}^2={\left( \int \left( \chi_U(x+y)-\chi_U(y) \right) \chi_U(y) dy \right)}^2$$
$$ \leq |U| \cdot \int {|\chi_U(x+y)-\chi_U(y)|}^2 dy$$
$$=|U| \cdot |\{(U-x) \backslash U \cup U \backslash (U-x)\}|=|U|M_U(x)$$ where the second line follows from the first by Cauchy-Schwartz and the third line is a direct calculation. We have 
$$ F(x)=|U|+(F(x)-|U|),$$ so $F(x)>0$ if 
$$ {|U|}^{\frac{1}{2}} \cdot M^{\frac{1}{2}}_U(x)<|U|,$$ which is the case if $M_U(x)<|U|$. It follows that 

$$ r(U) \ge \sup \left\{|x|: M_U(x)<|U| \right\}. $$

In particular, using that $M_U(x) \leq M^*(U)|x|$, we see that 
$$ r(U) \ge \frac{|U|}{M^{*}(U)}$$ and the proof is complete. 

\vskip.25in

\end{document}